\documentclass{article}

\ifdefined\AMM
\usepackage{maa-monthly}
\else
\usepackage{maa-monthly-arxiv}
\fi

\usepackage{picinpar,moresize,xfrac,graphpap,dcolumn,wrapfig,graphicx}

\DeclareGraphicsExtensions{.png,.pdf,.jpg}
\usepackage{subcaption}
\usepackage{stackengine}

\usepackage{tikz,pgfplots}
\usepackage{tikz-cd}
\usepackage{pgf}
\usepgfplotslibrary{colormaps}
\usetikzlibrary{pgfplots.colormaps}
\usetikzlibrary{arrows,automata,positioning,backgrounds}
\usepackage{rotating}
\pgfplotsset{compat=newest}
\pgfplotsset{plot coordinates/math parser=false}
\newlength\figureheight
\newlength\figurewidth

\usepackage[draft]{pdfcomment}
\usepackage{hyperref}
\hypersetup{
    colorlinks=true,
    linkcolor=black,
    filecolor=magenta,      
    urlcolor=magenta,
}

\usepackage[subrefformat=parens]{subcaption}
\definecolor{revcolor}{rgb}{0.0, 0.3, 0.8}

\usepackage{soul,array,calc,url,ragged2e,graphpap}
\usepackage{booktabs} 
\usepackage{tabularx}
\usepackage{colortbl}
\usepackage{multirow}
\usepackage{hyphenat}
\usepackage{transparent}
\usepackage{enumerate}
\usepackage{mathrsfs}
\usepackage{mdframed}
\usepackage{acro}

\usepackage[figure]{hypcap}
\usepackage{mathtools}
\mathtoolsset{centercolon}
\usepackage{nicefrac}
\usepackage{units}
\usepackage[normalem]{ulem}
\usepackage{cancel}
\usepackage{pbox}
\usepackage{multicol}
\usepackage{cases}
\usepackage[all]{xy}
\usepackage{nicematrix}

\usepackage{adjustbox}
\usepackage{wrapfig}
\AfterEndEnvironment{wrapfigure}{\setlength{\intextsep}{0mm}}

\usepackage{cleveref}
\crefmultiformat{subequation}%
{\edef\crefstripprefixinfo{#1}(#2#1#3}%
{,#2\crefstripprefix{\crefstripprefixinfo}{#1}#3)}%
{,#2\crefstripprefix{\crefstripprefixinfo}{#1}#3}%
{,#2\crefstripprefix{\crefstripprefixinfo}{#1}#3)}

\makeatletter
\DeclareFontFamily{OMX}{MnSymbolE}{}
\DeclareSymbolFont{MnLargeSymbols}{OMX}{MnSymbolE}{m}{n}
\SetSymbolFont{MnLargeSymbols}{bold}{OMX}{MnSymbolE}{b}{n}
\DeclareFontShape{OMX}{MnSymbolE}{m}{n}{
    <-6>  MnSymbolE5
   <6-7>  MnSymbolE6
   <7-8>  MnSymbolE7
   <8-9>  MnSymbolE8
   <9-10> MnSymbolE9
  <10-12> MnSymbolE10
  <12->   MnSymbolE12
}{}
\DeclareFontShape{OMX}{MnSymbolE}{b}{n}{
    <-6>  MnSymbolE-Bold5
   <6-7>  MnSymbolE-Bold6
   <7-8>  MnSymbolE-Bold7
   <8-9>  MnSymbolE-Bold8
   <9-10> MnSymbolE-Bold9
  <10-12> MnSymbolE-Bold10
  <12->   MnSymbolE-Bold12
}{}
\let\llangle\@undefined
\let\rrangle\@undefined
\DeclareMathDelimiter{\llangle}{\mathopen}%
                     {MnLargeSymbols}{'164}{MnLargeSymbols}{'164}
\DeclareMathDelimiter{\rrangle}{\mathclose}%
                     {MnLargeSymbols}{'171}{MnLargeSymbols}{'171}
\makeatother

\usepackage{fancyvrb,listings}
\usepackage{algorithm}
\usepackage{algorithmicx}
\usepackage[noend]{algpseudocode}

\algrenewcommand\alglinenumber[1]{\footnotesize #1:}
\makeatletter  
 \renewcommand{\ALG@name}{\small Algorithm} 
\makeatother 
\algdef{SE}[DOWHILE]{Do}{doWhile}{\algorithmicdo}[1]{\algorithmicwhile\ #1}%

\theoremstyle{theorem}
\newtheorem{theorem}{Theorem}
\newtheorem{lemma}{Lemma}

\newtheorem{corollary}{Corollary}
\newtheorem{problem}{Problem}
\newtheorem{example}{Example}
\theoremstyle{definition}
\newtheorem{definition}{Definition}

\newenvironment{claim}
{\par\medskip
 \phantomsection
 \noindent\textit{Claim.}\itshape
}
{\par\medskip}

\newtheorem{remark}{Remark}

\newcommand{\figref}[1]{\textup{Fig.~\ref{#1}}}

\def\ie{\emph{i.e.}}
\def\eg{\emph{e.g.}}

\def\CC{\mathbb{C}}

\def\NN{\mathbb{N}}

\def\RR{\mathbb{R}}

\def\bA{\mathbf{A}}

\def\bH{\mathbf{H}}

\def\bP{\mathbf{P}}
\def\bQ{\mathbf{Q}}

\def\ba{\mathbf{a}}

\def\be{\mathbf{e}}
\def\bff{\mathbf{f}}

\def\bx{\mathbf{x}}

\def\bz{\mathbf{z}}

\def\bzero{\mathbf{0}}

\usepackage{bbm}
\DeclareSymbolFont{bbold}{U}{bbold}{m}{n}
\DeclareSymbolFontAlphabet{\mathbbold}{bbold}
\newcommand{\ii}{\mkern1.5mu\mathbbold{i}\mkern1.5mu}

\renewcommand{\Im}{\operatorname{Im}}
\renewcommand{\Re}{\operatorname{Re}}

\DeclarePairedDelimiterX\braket[2]{\langle}{\rangle}{#1\,\delimsize\vert\,\mathopen{}#2}

\definecolor{b1}{rgb}{0.158099,0.313781,0.636957}
\definecolor{b2}{rgb}{0.525367,0.691857,0.998936}
\definecolor{g1}{rgb}{0.256000,0.640000,0.576000}
\definecolor{g2}{rgb}{0.559573,0.800781,0.760580}
\definecolor{p1}{rgb}{0.416000,0.192000,0.640000}
\definecolor{p2}{rgb}{0.680880,0.561880,0.799881}
\definecolor{r1}{rgb}{0.800000,0.200000,0.000000}
\definecolor{r2}{rgb}{1.000000,0.702595,0.603461}
\definecolor{k1}{gray}{0}
\definecolor{k2}{gray}{0.7}

\usepackage{enumitem}

\begin{document}

\ifdefined\AMM
\begin{center}
{\LARGE{Title Page for \textit{The American Mathematical Monthly}}}
\end{center}

\vspace{150px}
\begin{center}

\textbf{l'H\^opital rules for complex-valued functions in higher dimensions} \\
\vspace{25px}

Albert Chern,\\ 
UCSD,
California, USA, 
alchern@ucsd.edu

\vspace{15px}

Sadashige Ishida (corresponding author), \\
ISTA,
Klosterneuburg, Austria, 
sadashige.ishida@ist.ac.at

\vspace{15px}

\end{center}
\pagebreak
\fi

\title{l'H\^opital rules for complex-valued functions in higher dimensions} 
\markright{Complex L’H\^opital Rules in Higher Dimensions}
\author{Albert Chern and Sadashige Ishida} 
\maketitle

\begin{abstract}
In calculus, l'H\^opital's rule provides a simple way to evaluate the limits of quotient functions when both the numerator and denominator vanish.  But what happens when we move beyond real functions on a real interval?  In this article, we study when the quotient of two complex-valued functions in higher dimension can be defined continuously at the points where both functions vanish.  Surprisingly, the answer is far subtler than in the real-valued setting.  We provide a complete characterization for the continuity of the quotient function.  We also point out why extending this result to smoother quotients remains an intriguing challenge.

\end{abstract}

\vspace{10px}

\noindent{\textbf{Keywords: }{l'H\^opital theorem, complex functions}} 

\vspace{10px}

\section{Introduction}
The classical Bernoulli--l'H\^opital rule for real functions \(f(x)\) and \(g(x)\) on a real interval states conditions under which the ratio \(f(x)/g(x)\) is well-defined at a common zero of \(f\) and \(g\).
Specifically, if \(f(a) = g(a) = 0\), \(g'(a)\neq 0\), and the limit \(L = \lim_{x\to a}f'(x)/g'(x)\) exists, then \(\lim_{x\to a}f(x)/g(x) = L\).

The result was further extended to complex-valued functions on a real interval by \cite{Carter:1958:HRC} and, more recently, to real-valued multivariable functions by \cite{Lawlor:2020:HRM}. 

While the classical l'H\^opital rule and its extensions focus on the limit of \(f(x)/g(x)\) at the common zeros of \(f\) and \(g\), a stronger theorem can be stated in terms of regularity.  If we define \(f(a)/g(a)\coloneqq L\), then \(f(x)/g(x)\) remains a smooth function as long as both \(f\) and \(g\) are smooth. 
\begin{theorem}[Smooth quotient of real functions]
\label{thm:RealQuotient1D}
    Let \(f,g\in C^\infty(I)\) be smooth real-valued functions on a real interval \(I\).  Suppose \(f,g\) have only simple zeros in \(I\) and that their zero sets are identical.  Then there exists a non-vanishing smooth function \(\varphi\in C^\infty(I)\) such that \(f = g\varphi\) and $\varphi=f'/g'$ at the zero level sets.
\end{theorem}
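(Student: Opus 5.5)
The plan is to use the Hadamard (Taylor-with-integral-remainder) lemma locally at each zero, and to define \(\varphi\) directly as \(f/g\) away from the zeros. Let \(Z\subset I\) be the common zero set. Because every zero of \(g\) is simple, \(g'(a)\neq 0\) at each \(a\in Z\). By the inverse function theorem, each such \(a\) is then isolated, so \(Z\) is discrete in \(I\). On \(I\setminus Z\), set \(\varphi\coloneqq f/g\). This is smooth there, since \(g\neq0\), and it is non-vanishing, since \(f\neq 0\) off \(Z\) (the zero sets agree).

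Near a point \(a\in Z\), write
\[
f(x)=(x-a)F_a(x),\qquad F_a(x)=\int_0^1 f'(a+t(x-a))\,dt,
\]
and similarly \(g(x)=(x-a)G_a(x)\) with \(G_a\) defined the same way from \(g'\). Differentiating under the integral sign shows that \(F_a\) and \(G_a\) are smooth on \(I\). They satisfy \(F_a(a)=f'(a)\neq0\) and \(G_a(a)=g'(a)\neq 0\). Choose a neighborhood \(U_a\) of \(a\) such that \(U_a\cap Z=\{a\}\) and \(G_a\) does not vanish on \(U_a\). On \(U_a\setminus\{a\}\) we then have \(f/g=F_a/G_a\). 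So \(\varphi\) extends to \(U_a\) as the smooth function \(F_a/G_a\), with value \(f'(a)/g'(a)\neq 0\) at \(a\).

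These local definitions agree on overlaps, because they all coincide with \(f/g\) off the discrete set \(Z\). They therefore glue to a single smooth \(\varphi\) on \(I\) with \(f=g\varphi\) everywhere. The function \(\varphi\) is non-vanishing on \(I\setminus Z\) by the first paragraph and on \(Z\) by the second, and \(\varphi=f'/g'\) on \(Z\).

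The only point requiring care is the smoothness of the Hadamard quotients \(F_a\) and \(G_a\), which follows from differentiating under the integral sign. Everything else is bookkeeping.
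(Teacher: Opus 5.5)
Your proof is correct and follows essentially the same route as the paper. Both define $\varphi=f/g$ off the zero set and, near each simple zero $a$, factor $f=(x-a)F_a$ and $g=(x-a)G_a$ via the Hadamard/Whitney lemma; the paper's Lemma~\ref{lemma:Whitney} uses exactly your integral quotient $\frac{1}{x}\int_0^x f'(t)\,dt$, just normalized so that $F(a)=G(a)=1$. Your explicit handling of the discreteness of the zero set and of the gluing on overlaps fills in bookkeeping that the paper leaves implicit.
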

Here, a real function \(f\) on \(I\) is said to have simple zeros if \(f'(a)\neq 0 \) whenever \(f(a) = 0\).
An elementary proof of \autoref{thm:RealQuotient1D} is provided in \autoref{app:FirstOrderTaylorExpansion} and \ref{app:ProofOfRealQuotient1D}.

Intuitively, functions with common zeros can be thought of as sharing common factors, which cancel out in their quotient.  A similar result holds for real-valued multivariable functions.  Let \(\Omega\subseteq \RR^n\) be an open set, with \(n\geq 2\), serving as the domain.  The zero set of a real-valued function \(f\colon\Omega\to\RR\) is denoted by \(\Sigma_f = \{\ba\in\Omega\,|\, f(\ba) = 0\}\subset\Omega\).
A smooth function \(f\colon\Omega\to\RR\) is said to have simple zeros if \(\nabla f|_\ba \neq 0\) for all \(\ba\in\Sigma_{f}\).  Geometrically, by the implicit function theorem, the zero set \(\Sigma_f\) of a function with simple zeros forms a smooth hypersurface.
\begin{theorem}[Smooth quotient of real functions in arbitrary dimension]
\label{thm:RealQuotientnD}
    Let \(f,g\in C^\infty(\Omega)\) be smooth real-valued functions with simple zeros and a common zero set \(\Sigma_{f} = \Sigma_{g}\).  Then there exists a non-vanishing smooth function \(\varphi\in C^\infty(\Omega)\) such that \(f = g\varphi\).
\end{theorem}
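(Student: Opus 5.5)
We prove \autoref{thm:RealQuotientnD} by working locally near \(\Sigma \coloneqq \Sigma_f=\Sigma_g\) and then gluing. Away from \(\Sigma\) the quotient \(\varphi = f/g\) is already smooth and non-vanishing, since \(g\neq 0\) there. So everything depends on understanding \(f/g\) near a point \(\ba\in\Sigma\).

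\emph{Local division.} Fix \(\ba\in\Sigma\). Since \(\nabla g|_\ba\neq 0\), the implicit function theorem gives a neighborhood \(U\) of \(\ba\) and a diffeomorphism \(\Phi\) from \(U\) onto a convex neighborhood of \(\bzero\) with \(g\circ\Phi^{-1}(\bx) = x_1\). In these coordinates \(\Sigma\cap U = \{x_1=0\}\), and \(\tilde f \coloneqq f\circ\Phi^{-1}\) vanishes on \(\{x_1=0\}\). The parametric form of the first-order Taylor expansion (the argument of \autoref{app:FirstOrderTaylorExpansion}) gives
\[
\tilde f(x_1,x') = x_1 \int_0^1 \partial_1\tilde f(t x_1, x')\,dt \eqqcolon x_1\,\tilde h(x_1,x').
\]
Here \(\tilde h\) is smooth by differentiation under the integral sign. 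Setting \(\varphi_U \coloneqq \tilde h\circ\Phi\) yields \(f = g\varphi_U\) on \(U\).

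\emph{Non-vanishing.} Off \(\Sigma\) we have \(\varphi_U = f/g\neq 0\), because \(f\) vanishes only on \(\Sigma\). On \(\Sigma\), \(\tilde h(0,x') = \partial_1\tilde f(0,x')\). Since \(\tilde f\) vanishes on \(\{x_1=0\}\), its gradient there is \(\partial_1\tilde f\, \be_1\), and this is non-zero because \(f\) has simple zeros and \(\Phi\) is a diffeomorphism. Hence \(\varphi_U\) does not vanish anywhere on \(U\). Equivalently, on \(\Sigma\) we have \(\nabla f = \varphi\,\nabla g\), so \(\varphi = \langle\nabla f,\nabla g\rangle/|\nabla g|^2\), the analogue of \(f'/g'\).

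\emph{Gluing.} This is the main point to check, but it is mild. The local quotients agree with each other, and with \(f/g\), on the dense open set \(\Omega\setminus\Sigma\). Density holds because \(\Sigma\) is a smooth hypersurface with empty interior. Continuity then forces any two local quotients to agree on overlaps. We therefore define \(\varphi\) by \(f/g\) off \(\Sigma\) and by \(\varphi_U\) on each \(U\); this is well defined, smooth (being locally smooth), non-vanishing, and satisfies \(f = g\varphi\) on all of \(\Omega\). No partition of unity is needed, since uniqueness comes for free from density.
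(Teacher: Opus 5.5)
Your argument is correct and is essentially the paper's proof: straighten the common zero set with the implicit function theorem, factor out $x_1$ via the Hadamard--Whitney integral formula, and read off a smooth non-vanishing quotient. Normalizing the chart so that $\tilde g = x_1$ exactly (which spares you the division $p/q$ the paper performs) and making the density-based gluing explicit are only cosmetic differences.

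One minor correction: the formula $\tilde f(x_1,x') = x_1\int_0^1\partial_1\tilde f(tx_1,x')\,dt$ needs each segment from $(0,x')$ to $(x_1,x')$ to lie in the chart. Convexity alone does not guarantee this, so take the image of $\Phi$ to be a ball or box centered at $\bzero$.
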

A proof of \autoref{thm:RealQuotientnD} is provided in \autoref{app:ProofOfRealQuotientnD}.

One might ask whether a similar result holds for complex-valued  functions in arbitrary dimension. Specifically, do two complex-valued functions on an open set \(\Omega\subset\RR^n\) with a common zero set always have a smooth complex-valued quotient? The answer to this question is surprisingly different from the real-valued case.

As we will see in Section \ref{sec:RatioOfComplexFunctions}, having common simple zeros is generally insufficient. It turns out that the existence of a continuous quotient requires an additional algebraic relation between the gradients of the functions at the zero set. We describe this condition in the following.



\section{Quotient of Complex Functions}
\label{sec:RatioOfComplexFunctions}

Most existing generalizations of the l'H\^opital theorem for complex-valued functions are formulated on a real interval \cite{Carter:1958:HRC} or for holomorphic functions on a complex plane, a restrictive class of functions studied in complex analysis \cite{AhlforsLars:1979:complex}.
In the latter case, the quotient $f/g$ is globally defined and it coincides with $f'(\ba)/g'(\ba)$ at each common simple zero $\ba$. 

Of course, a theory that guarantees the existence of a quotient should be much more general than simply requiring both functions to be locally holomorphic. For instance, the quotient \(f/f = 1\) is smooth even when \(f\) is not holomorphic. 
Moreover, holomorphy is not invariant under smooth coordinate transformations, whereas the existence of a smooth quotient is independent. Additionally, it is unclear how the notion of holomorphy should make sense in a domain of general real dimension. 

So, can we generalize the l'H\^opital theorem for complex-valued functions in a space of arbitrary dimension?  This is the main question we explore in this article. 


Let \(\Omega\subset\RR^n\) be an open set with \(n\geq 2\). For a complex-valued function \(f\colon\Omega\to\CC\), the zero set is defined as \(\Gamma_f \coloneqq \{\ba\in\Omega\,|\, f(\ba) = 0\}\subset\Omega\), 
and the notion of simple zeros for complex-valued functions is characterized by a non-degeneracy condition on the first derivative.

\begin{definition}[Simple zero]
For $n\geq 2$, a differentiable function \(f\colon\Omega(\subset \RR^n) \to\CC\) is said to have \emph{simple zeros} if, at each zero \(\ba\in\Gamma_f\), the derivative \(Df|_{\ba}\colon\RR^n\xrightarrow{\rm linear}\CC\cong\RR^2\), considered as a real linear map, has rank 2. 
\end{definition}

By the implicit function theorem, the zero set \(\Gamma_f\) of a function with simple zeros forms a codimension-2 submanifold embedded in \(\Omega\).

\begin{problem}[Smooth quotient of complex multivariable functions]\label{prob:ComplexLHopital}
Let \(f,g\in C^\infty(\Omega;\CC)\) be smooth complex-valued functions with a common zero set \(\Gamma_f = \Gamma_g\) consisting of simple zeros.  Does there exist a nonvanishing complex-valued function \(\varphi\in C^\infty(\Omega;\CC)\) so that \(f = g\varphi\)?
\end{problem}
In most cases, the answer to \Cref{prob:ComplexLHopital} is negative, as demonstrated in the following examples.
\begin{example}\label{eg:NonexampleRatio1}
    Let \(\Omega=\RR^2\), \(f(x,y)\coloneqq x+y+\ii y\), and \(g(x,y) = x + \ii y\).  They have a common simple zero at the origin.  The quotient of these two functions is given by
    \(
    \varphi(x,y) = \frac{f(x,y)}{g(x,y)} = \frac{x+y+\ii y}{x+\ii y}. 
    \)
    This function is constant along any straight line passing through the origin, yet is undefined at the origin. To see this, let us take a line $(x(t),y(t))=(t,ct)$ with some real constant $c$ and a parameter $t$. Then we have \(\varphi(t,ct)=\frac{t+ct+\ii ct}{t+\ii ct}=\frac{1+c+\ii c}{1+\ii c}\) and the value at the origin depends on $c$, as shown in \figref{fig:discont_quotient}.
\end{example}
\begin{example}\label{eg:NonexampleRatio2}
    Let \(\Omega = \RR^2\), \(f(x,y) = x-\ii y\), and \(g(x,y) = x+\ii y\), which have a common simple zero at the origin.  Their quotient is given by \(\varphi(x,y) = \frac{f(x,y)}{g(x,y)} = \frac{x-\ii y}{x+\ii y}\). Just as \autoref{eg:NonexampleRatio1},  this function is also constant along any line passing through the origin but is discontinuous at the origin.
\end{example}

In Examples~\ref{eg:NonexampleRatio1} and \ref{eg:NonexampleRatio2}, the quotient \(f/g\) has a well-defined l'H\^opital limit at the origin when restricted to a one-dimensional path approaching the origin (\eg, along the real axis), as studied in \cite{Carter:1958:HRC}.  However, \(f/g\) is not continuous as a function of two variables.

\vspace{-40pt}
\begin{figure}[h]
    \centering
    \begin{minipage}[t]{0.4\textwidth}
    \begin{picture}(210,210)
        \put(0,0)
        {\includegraphics[width=1.0\columnwidth,trim={4cm 0cm 4cm 0cm},clip]{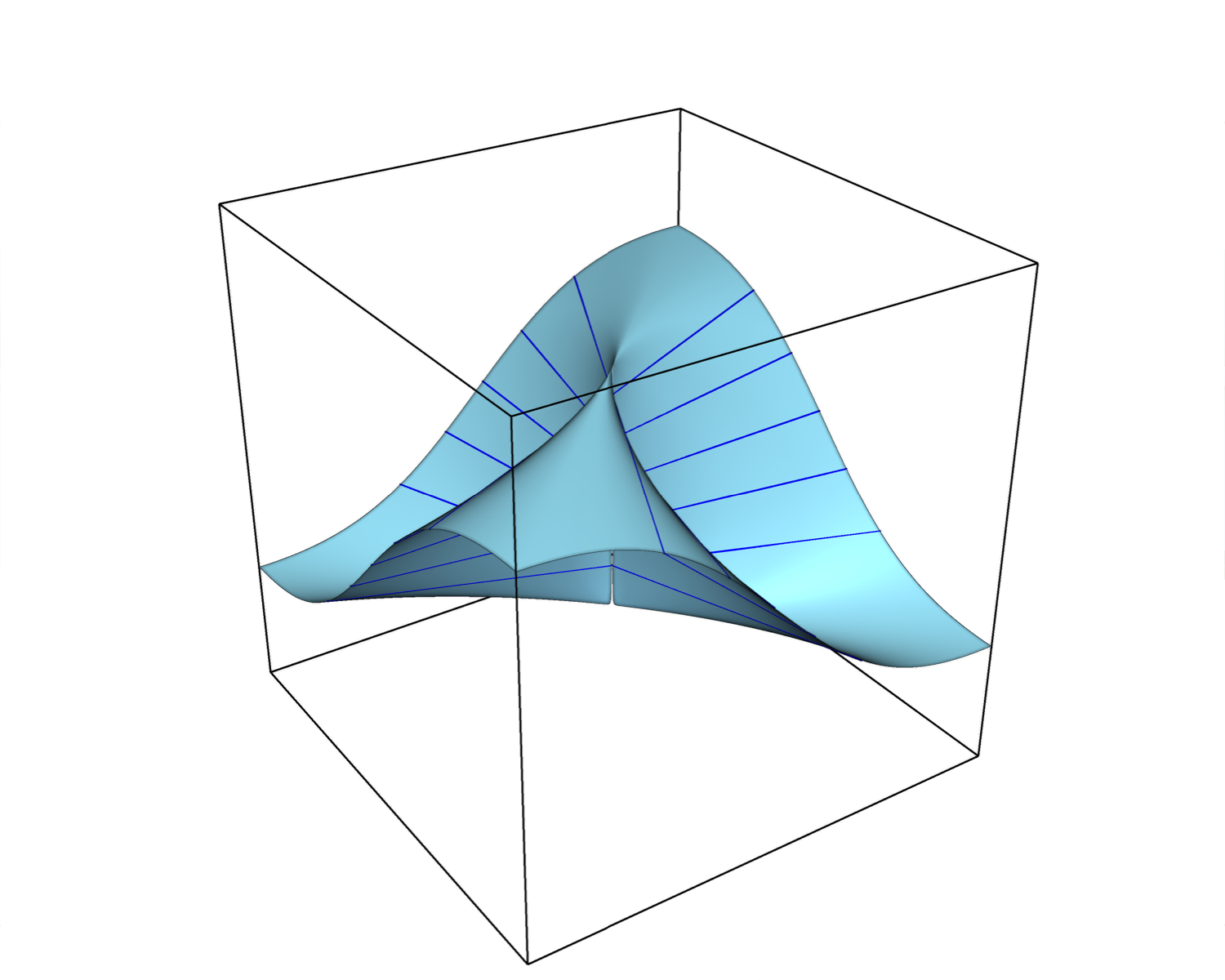}}
        
      \put(95,12){\small\(x\)}
       \put(35,20){\small\(y\)}
    \end{picture}
    \caption{The real part of the quotient of \(f(x,y)=x+y+\ii y\) and $g(x,y)=x+\ii y$. Blue lines are  level lines, showing that $f/g$ takes a constant value along any line passing through the origin.}
    \label{fig:discont_quotient}
     \end{minipage}
         \hspace{20pt}
          \begin{minipage}[t]{0.4\textwidth}
    \begin{picture}(210,210)
        \put(0,0)
        {\includegraphics[width=1.0\columnwidth,trim={4cm 0cm 4cm 0cm},clip]{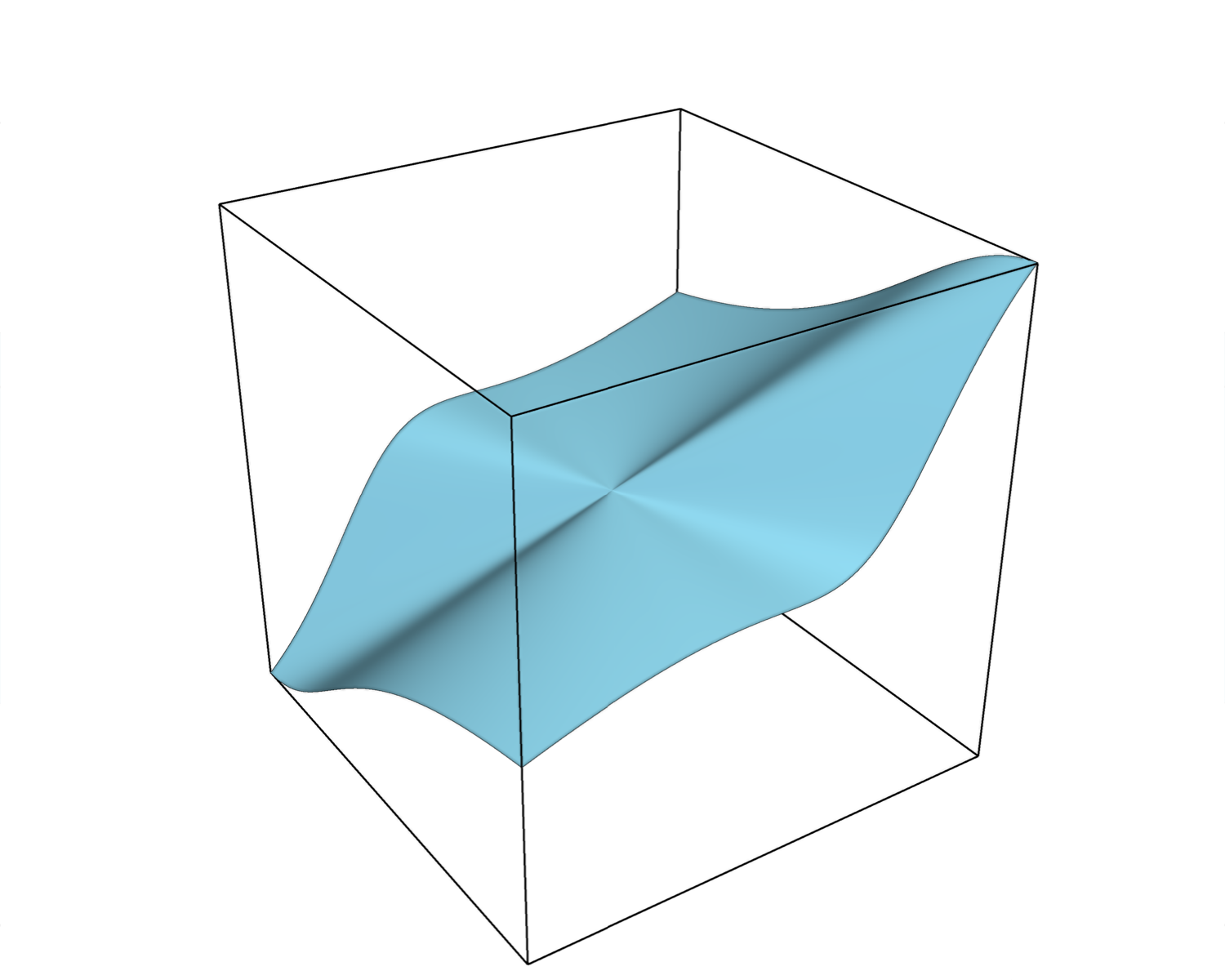}}
     \put(95,12){\small\(x\)}
       \put(35,20){\small\(y\)}
    \end{picture}
    \caption{The real part of the quotient of 
     \(f(x,y)= x+x^2+\ii  x+\ii y\) and \(g(x,y) = x + \ii x+\ii y\). The function $f/g$ is continuous but not differentiable at the origin.}
    \label{fig:c0_quotient}
     \end{minipage}
\end{figure}

\begin{example}\label{eg:c0_quotient}
        Let \(\Omega=\RR^2\), \(f(x,y)= x+x^2+\ii  x+\ii y\), and \(g(x,y) = x + \ii x+\ii y\).
        This pair defines a continuous quotient,
         although it is not differentiable at the origin, as seen in \figref{fig:c0_quotient}. 
\end{example}
This example defines a continuous quotient $f/g$ unlike the previous two examples. We now investigate what makes the difference.

\section{Ratio of derivatives}
Similar to the classical l'H\^opital's rule, the behavior of the quotient of two functions near their common zero is closely related to the quotient of their derivatives. In this section, we explore a more intricate structure that arises near a common simple zero of two complex-valued functions.

Let \(f,g\in C^\infty(\Omega;\CC)\) be smooth complex-valued functions with simple zeros and a common zero set \(\Gamma\).
Then, for all \(\ba\in\Gamma\), the derivatives \(Df|_{\ba}\) and \(Dg|_{\ba}\) are surjective real linear maps to \(\CC\cong\RR^2\).
This defines a unique real matrix \(\bA|_{\ba}\in\RR^{2\times 2}\) (an endormorphism on \(\CC\cong\RR^2\)) that relates the two real linear operators via
\begin{align}
\label{eq:DefinitionOfA}
    Df|_{\ba} = \bA|_{\ba} Dg|_{\ba}.
\end{align}
The matrix representation \(\bA|_{\ba}\) of this relationship depends only on the choice basis for \(\CC\cong\RR^2\), which is taken to be \(1\) and \(\ii\).  In particular, the matrix \(\bA|_{\ba}\) is independent of the choice of coordinate system for \(\Omega\).  

If a change of coordinate \(\Phi\colon\tilde\Omega\to\Omega\) is applied, where \(\Phi(\tilde\ba) = \ba\), and we define \(\tilde f\coloneqq f\circ\Phi\) and \(\tilde g\coloneqq g\circ\Phi\), then the derivatives transform as \(D\tilde f|_{\tilde\ba} = Df|_{\ba} D\Phi|_{\tilde\ba}\) and \(D\tilde g|_{\tilde\ba} = Dg|_{\ba}D\Phi|_{\tilde\ba}\). 
In particular, the relation \(D\tilde f|_{\tilde\ba} = \bA|_{\tilde\ba} D\tilde g|_{\tilde\ba}\) still holds in the transformed coordinates.
That is, the matrix \(\bA\) remains invariant in this change of coordinate. 
\begin{definition}[Ratio of derivatives]
    The function \(\bA\colon\Gamma\to\RR^{2\times 2}\) defined in \eqref{eq:DefinitionOfA} is denoted by
    \begin{align}
    \label{eq:RatioOfDerivatives}
        \left(\frac{Df}{Dg}\right)_{\ba}\coloneqq\bA|_{\ba},\quad \text{for } \ba\in \Gamma.
    \end{align}
\end{definition}

While in general \(f/g\) is discontinuous at \(\Gamma\), as demonstrated in Examples~\ref{eg:NonexampleRatio1} and \ref{eg:NonexampleRatio2}, the nature of the discontinuity is such that the directional limit of \(f/g\) at a point in \(\Gamma\) depends on the path of approach.  This path dependency can be described in terms of the quotient of derivatives \eqref{eq:RatioOfDerivatives}.
\begin{theorem}[Path dependent l'H\^opital rule]\label{thm:PathDependentlHopital}
    Let \(f,g\in C^\infty(\Omega;\CC)\) be smooth complex-valued functions with simple zeros and share a common zero set \(\Gamma\).  
    Suppose \(\gamma\colon (-\varepsilon,\varepsilon)\to\Omega\) is a smooth path such that \(\gamma(0) \in \Gamma\) and \(\gamma'(0)\) is transversal to \(\Gamma\), meaning that \(g_\gamma'\coloneqq \lim_{t\to 0} g(\gamma(t))/t=Dg|_{\gamma(0)}\gamma'(0)\in\CC\) is nonzero.  Then the directional limit of \(f/g\) along \(\gamma\) exists and is given by 
    \begin{align}
    \label{eq:PathDependentlHopital}
        \lim_{t\to 0} \frac{f(\gamma(t))}{g(\gamma(t))} = {1\over g_\gamma'}\left(\frac{Df}{Dg}\right)_{\gamma(0)}g_\gamma'.
    \end{align}
    Here, \(\left(\frac{Df}{Dg}\right)_{\gamma(0)}g_\gamma'\) represents the action of the \(\RR^{2\times 2}\) matrix \({Df\over Dg}\) on \(g_\gamma'\) seen as a real two dimensional vector, while division by  \(g_\gamma'\) is interpreted in terms of complex numbers.  Moreover, the quotient \(f/g\) remains smooth when restricted to the path, meaning that there exists a smooth nowhere-vanishing function \(\varphi\in C^\infty((-\epsilon,\epsilon);\CC)\) such that \(f(\gamma(t)) = g(\gamma(t))\varphi(t)\).
\end{theorem}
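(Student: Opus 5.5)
The plan is to reduce everything to one real variable and then apply the one-dimensional factorization from the appendix. Set \(F(t)\coloneqq f(\gamma(t))\) and \(G(t)\coloneqq g(\gamma(t))\), both in \(C^\infty((-\varepsilon,\varepsilon);\CC)\) with \(F(0)=G(0)=0\).

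By the first-order Taylor expansion with smooth remainder (the Hadamard-type lemma of \autoref{app:FirstOrderTaylorExpansion}, applied to real and imaginary parts separately), we can write \(F(t)=t\,\hat F(t)\) and \(G(t)=t\,\hat G(t)\). Here \(\hat F,\hat G\) are smooth, with
\[
\hat F(0)=F'(0)=Df|_{\gamma(0)}\gamma'(0), \qquad \hat G(0)=G'(0)=Dg|_{\gamma(0)}\gamma'(0)=g_\gamma'.
\]
The defining relation \eqref{eq:DefinitionOfA} gives \(Df|_{\gamma(0)}\gamma'(0)=\bA|_{\gamma(0)}Dg|_{\gamma(0)}\gamma'(0)\). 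Hence \(\hat F(0)=\left(\frac{Df}{Dg}\right)_{\gamma(0)}g_\gamma'\), viewed as a vector in \(\RR^2\cong\CC\).

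Since \(\hat G(0)=g_\gamma'\neq 0\) by the transversality assumption, continuity gives a possibly smaller interval on which \(\hat G\) does not vanish. On it, \(\varphi\coloneqq \hat F/\hat G\) is a smooth complex-valued function, because complex division by a nonvanishing smooth function is smooth. For \(t\neq0\) we have \(F/G=\hat F/\hat G\). So \(f(\gamma(t))=g(\gamma(t))\varphi(t)\), and the limit in \eqref{eq:PathDependentlHopital} equals \(\varphi(0)=\hat F(0)/\hat G(0)\), which is exactly the claimed expression.

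It remains to show that \(\varphi\) is nowhere vanishing. At \(t=0\), \(\bA\) is invertible: \(Df\) and \(Dg\) both have rank \(2\), so \(Df=\bA Dg\) forces \(\bA\) to have rank \(2\). Hence \(\hat F(0)\neq0\). For \(t\neq 0\), the zeros of \(\varphi\) are the zeros of \(F\), and these coincide with the zeros of \(G\) because \(\Gamma_f=\Gamma_g\). Those zeros are excluded once \(\hat G\neq 0\), after shrinking the interval if necessary.

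The main obstacle is only bookkeeping about the domain. The statement is on \((-\varepsilon,\varepsilon)\), but nonvanishing of \(\hat G\) is guaranteed only near \(0\). We therefore either shrink \(\varepsilon\), as is implicit in a local limit statement, or observe that the path could meet \(\Gamma\) elsewhere non-transversally, in which case the claim is understood locally around \(t=0\).
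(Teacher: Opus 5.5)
Your proposal is correct and is essentially the paper's argument. You restrict to the path, factor $f\circ\gamma = t\hat F$ and $g\circ\gamma = t\hat G$ by the complex Whitney--Hadamard lemma (which is all the paper's one-dimensional complex quotient lemma does), and read off $\varphi(0)=\hat F(0)/\hat G(0)$ via the chain rule and $Df=\bA\,Dg$. Your explicit check that $\bA$ is invertible (so $\varphi(0)\neq 0$) and your shrinking of the interval are details the paper leaves implicit. The shrinking is genuinely needed, since the path may meet $\Gamma$ again non-transversally, where the hypothesis of the one-dimensional lemma fails and the quotient along the path can be discontinuous.
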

We give a proof in \autoref{app:proof_PathDependentlHopital}.
Example \autoref{eg:NonexampleRatio1} and \autoref{eg:NonexampleRatio2} in the previous section fall onto the case of \autoref{thm:PathDependentlHopital} and their path-wise limits at the zero is given as in \eqref{eq:PathDependentlHopital}.

\section{Continuous Quotient of Complex Multivariable Functions}
\label{sec:QuotientOfComplexMultivariableFunctions}
The limit \eqref{eq:PathDependentlHopital} for the quotient \(f/g\) of two complex-valued functions with a common simple zero set \(\Gamma\) is generally path-dependent unless the linear operator \(({Df\over Dg})\) satisfies additional conditions.
\begin{definition}[Complex linearity]
\label{def:ConformalEquivalence}
    Two smooth complex-valued functions \(f,g\in C^\infty(\Omega,\CC)\) with simple zeros and a common zero set \(\Gamma\) are said to be \emph{complex linearly related} if the quotient of their derivatives, \(({Df\over Dg})\colon\Gamma\to\RR^{2\times 2}\), is a scaled rotation at every point in \(\Gamma\).  That is, \(({Df\over Dg})\) takes the form
    \begin{align}\label{eq:ScaledRotation}
        \left({Df\over Dg}\right)_{\ba} = 
        \begin{bmatrix}
            u(\ba)&-v(\ba)\\
            v(\ba)&u(\ba)
        \end{bmatrix},\quad\ba\in\Gamma,
    \end{align}
    for some functions \(u,v\colon\Gamma\to\RR\).
\end{definition}

Matrices of the form \eqref{eq:ScaledRotation} can be rewritten as \(
\begin{bsmallmatrix}
    u&-v\\
    v & u
\end{bsmallmatrix}
= r
\begin{bsmallmatrix}
    \cos\theta & -\sin\theta\\
    \sin\theta & \cos\theta
\end{bsmallmatrix}
\) where \(r = \sqrt{u^2 + v^2}\) and \(\theta = \arctan{v\over u}\), showing that the transformation represents a scaled rotation.
Applying the matrix \(\begin{bsmallmatrix}
    u&-v\\
    v & u
\end{bsmallmatrix}\) to an \(\RR^2\) vector is equivalent to complex multiplication by \(u+\ii v = re^{\ii\theta}\).
Another equivalent characterization of a matrix being a scaled rotation is that its two singular values are equal and its determinant is positive.

\begin{theorem}[Continuous quotient of complex functions in arbitrary dimension]\label{thm:ComplexQuotientnD}
    Let \(f,g\in C^1(\Omega;\CC)\) with simple zeros and a common zero set \(\Gamma\).  Then the following statements are equivalent:
    \begin{enumerate}[leftmargin=*,label=(\roman*)]
        \item 
        There exists a non-vanishing continuous function \(\varphi\in C^0(\Omega)\) such that \(f = g\varphi\).       
        \item \(f\) and \(g\) are complex linearly related.
    \end{enumerate}
\end{theorem}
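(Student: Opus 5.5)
We prove the two implications separately. The key input is a first-order Taylor expansion at points of \(\Gamma\), which is available under the \(C^1\) hypothesis; we use it in the spirit of \autoref{thm:PathDependentlHopital}, whose smoothness hypothesis we avoid relying on.

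\emph{(i)\(\Rightarrow\)(ii).} Fix \(\ba\in\Gamma\) and write \(\bA=\bA|_\ba\). Since \(Dg|_\ba\) has rank \(2\), for every nonzero \(w\in\CC\cong\RR^2\) there is a vector \(\bx\in\RR^n\) with \(Dg|_\ba\bx=w\). Consider the straight line \(\ba+t\bx\). Because \(f,g\) are \(C^1\), we have \(g(\ba+t\bx)=t\,w+o(t)\) and \(f(\ba+t\bx)=t\,\bA w+o(t)\). Hence \(f/g\to (\bA w)/w\) as \(t\to0^+\), where the division is complex. If \(\varphi\) is continuous, this limit must equal \(\varphi(\ba)\) for every \(w\neq0\). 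So \(\bA w=\varphi(\ba)w\) for all \(w\), i.e.\ \(\bA\) is real-linearly equal to complex multiplication by \(\varphi(\ba)=u+\ii v\), which is exactly the form \eqref{eq:ScaledRotation}. Moreover \(\varphi(\ba)\neq0\) is consistent with the fact that \(\bA\) is invertible: both derivatives are surjective, so \(Df|_\ba=\bA\,Dg|_\ba\) forces \(\operatorname{rank}\bA=2\).

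\emph{(ii)\(\Rightarrow\)(i).} Define \(\varphi=f/g\) off \(\Gamma\), and \(\varphi(\ba)=u(\ba)+\ii v(\ba)\) on \(\Gamma\). This value is nonzero because \(\bA|_\ba\) is invertible, and \(\varphi\) is nonvanishing off \(\Gamma\) because \(f\neq0\) there. Continuity off \(\Gamma\) is clear. The remaining issue is continuity at \(\ba\in\Gamma\) as \(\bx\to\ba\) along arbitrary sequences, including sequences approaching through \(\Gamma\) or tangentially to it. This is the main step.

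Our plan for that step is as follows. For \(\bx\) near \(\ba\), let \(\bz\in\Gamma\) be a nearby point chosen so that \(\bx-\bz\) is controlled by \(|g(\bx)|\). Concretely, use the implicit function theorem to straighten \(\Gamma\) locally, so that \(g\)'s real and imaginary parts become two of the coordinates; alternatively, take \(\bz\) to be the closest point of \(\Gamma\), which is locally well defined. The goal is the estimate \(|\bx-\bz|\le C|g(\bx)|\), which holds because \(Dg|_\bz\) restricted to the normal space of \(\Gamma\) at \(\bz\) is uniformly invertible near \(\ba\). Taylor expansion at \(\bz\) then gives
\[
g(\bx)=Dg|_\bz(\bx-\bz)+o(|\bx-\bz|),\qquad f(\bx)=\bA|_\bz\,Dg|_\bz(\bx-\bz)+o(|\bx-\bz|)=\varphi(\bz)\,Dg|_\bz(\bx-\bz)+o(|\bx-\bz|).
\]
The \(o\)-terms are uniform in \(\bz\) near \(\ba\): by the \(C^1\) mean value estimate they are bounded by \(\sup_{|\by-\ba|<\delta}\|Dg|_\by-Dg|_\bz\|\,|\bx-\bz|\), and this supremum tends to \(0\) as \(\delta\to0\) by uniform continuity of the derivative. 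It follows that \(f(\bx)-\varphi(\bz)g(\bx)=o(|\bx-\bz|)=o(|g(\bx)|)\), so \(|\varphi(\bx)-\varphi(\bz)|\to0\).

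It remains to show \(\varphi(\bz)\to\varphi(\ba)\). On \(\Gamma\), \(\bA\) depends continuously on the point, since \(\bA|_\bz=Df|_\bz\,(Dg|_\bz)^{+}\) is built from continuous derivatives, with \((Dg|_\bz)^{+}\) a right inverse of the surjective map \(Dg|_\bz\). Hence \(\varphi|_\Gamma\) is continuous, and the triangle inequality concludes. The delicate point is the uniformity of the normal projection and of the Taylor remainders under only \(C^1\) regularity; this is where we would take care, using the straightened coordinates.
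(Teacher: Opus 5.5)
Your proof is correct and is essentially the paper's argument. For (i)$\Rightarrow$(ii) you use the same first-order expansion giving $Df|_\ba=\varphi(\ba)\,Dg|_\ba$. For (ii)$\Rightarrow$(i) the paper straightens $\Gamma$ and writes $\tilde f=\bP\bx^\perp$, $\tilde g=\bQ\bx^\perp$ with continuous $\bP,\bQ$ (normal derivatives averaged from the foot point $(\bzero,\bx^\parallel)$). That factorization is just an integral form of your uniform Taylor expansion at $\bz$, and the paper's bound $|\bQ(\bx)\bx^\perp|\ge c|\bx^\perp|$ is your $|\bx-\bz|\le C|g(\bx)|$.

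The remaining differences are cosmetic: you pass through $\varphi(\bz)$ and continuity of $\varphi|_\Gamma$ rather than comparing directly with $\lambda(\bzero)$, and you stay in the original coordinates. One small correction: for a merely $C^1$ submanifold the nearest point on $\Gamma$ need not be unique, so it is not "locally well defined", but any choice of nearest point works and your argument does not depend on uniqueness.
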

We give a proof in \autoref{app:proof_ ComplexQuotientnD}. The following corollary is an immediate consequence of \autoref{thm:ComplexQuotientnD}.

\begin{corollary}\label{cor:ComplexLinearEquivalenceRelation}
    The notion of complex linearity relation, as defined in \Cref{def:ConformalEquivalence}, constitutes an equivalence relation for functions sharing common simple zeros.
\end{corollary}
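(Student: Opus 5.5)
The plan is to verify reflexivity, symmetry and transitivity of the relation ``\(f\) and \(g\) are complex linearly related'' on the class of smooth functions with simple zeros and a fixed common zero set \(\Gamma\). Each property is translated, via \autoref{thm:ComplexQuotientnD}, into a statement about existence of a non-vanishing continuous quotient. Such quotients are closed under the obvious algebraic operations.

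\emph{Reflexivity.} Take \(\varphi\equiv 1\), which is continuous and non-vanishing with \(f = f\varphi\). Direction (i)\(\Rightarrow\)(ii) of \autoref{thm:ComplexQuotientnD} then gives that \(f\) is complex linearly related to itself. This can also be seen directly: \(Df|_\ba = I\,Df|_\ba\), and by uniqueness of \(\bA|_\ba\) we get \(\bA|_\ba = I\), which is a scaled rotation with \(u=1\), \(v=0\).

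\emph{Symmetry.} If \(f\) and \(g\) are complex linearly related, direction (ii)\(\Rightarrow\)(i) gives a non-vanishing continuous \(\varphi\) with \(f = g\varphi\). Then \(1/\varphi\) is continuous and non-vanishing, and \(g = f\cdot(1/\varphi)\) holds on all of \(\Omega\). So (i)\(\Rightarrow\)(ii) applied to the pair \((g,f)\) yields that \(g\) and \(f\) are complex linearly related.

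\emph{Transitivity.} Suppose \(f\sim g\) and \(g\sim h\), all with common zero set \(\Gamma\). We obtain non-vanishing continuous \(\varphi,\psi\) with \(f = g\varphi\) and \(g = h\psi\). Hence \(f = h(\varphi\psi)\), and \(\varphi\psi\) is continuous and non-vanishing, so \(f\sim h\).

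The only point needing care, and the main (mild) obstacle, is that the relation is only defined for pairs sharing a common zero set of simple zeros, so the hypotheses of \autoref{thm:ComplexQuotientnD} must be checked for each pair. This holds because we restrict to the class of functions with simple zeros and common zero set \(\Gamma\), and sharing a zero set is itself transitive and symmetric. The regularity mismatch (the corollary is stated for smooth functions, the theorem for \(C^1\)) is harmless because smooth implies \(C^1\).

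As a sanity check independent of the theorem, we can also argue at the level of matrices. Write \(\bA_{fg}\) for \((Df/Dg)\). Uniqueness of \(\bA\), together with surjectivity of the derivatives, gives \(\bA_{gf} = \bA_{fg}^{-1}\) and \(\bA_{fh} = \bA_{fg}\bA_{gh}\). Scaled rotations with positive scale form a group, namely \(\CC^\times\). This confirms the corollary directly, once one notes that \(\bA\) is invertible, since both derivatives have rank 2.
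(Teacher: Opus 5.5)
Your proposal is correct and follows the paper's route: the paper calls the corollary an immediate consequence of \autoref{thm:ComplexQuotientnD} because the relation ``$f=g\varphi$ with $\varphi$ continuous and non-vanishing'' is reflexive ($\varphi\equiv 1$), symmetric ($1/\varphi$) and transitive ($\varphi\psi$), and you spell out exactly this. Your closing matrix check ($\bA_{gf}=\bA_{fg}^{-1}$, $\bA_{fh}=\bA_{fg}\bA_{gh}$, with the nonzero scaled rotations forming the group $\CC^\times$) is also valid, and it proves the corollary directly from the definition without using the theorem at all.
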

\begin{remark}
    Higher regularity of $f$ and $g$ does not, in general, improve the regularity of the quotient in \autoref{thm:ComplexQuotientnD}. Let us consider the pair $f(x,y)=x+x^2+\ii x+\ii y$ and $g(x,y)=x+\ii x + \ii y$ given in Example \ref{eg:c0_quotient}. Its quotient of derivative $Df/Dg$ is an identity matrix at $(0,0)$, hence \autoref{thm:ComplexQuotientnD} applies. But the quotient is merely $C^0$ although both $f$ and $g$ are smooth, as illustrated in \figref{fig:c0_quotient}.
\end{remark}

\subsection{Open problem}
At this point we are unaware of the exact condition for the existence of a smooth quotient. An obvious sufficient condition is that the complex linearity of $Df/Dg$ (\ie, it defines a complex number) extends to an open neighborhood of $\Gamma$. In 2D, this amounts to defining $f$ by multiplying a nowhere vanishing holomorphic function with $g$.

This, of course, is not a necessary condition. For example, consider the pair $f(x,y)=(x+\ii y)e^{\ii(x^2+y^2)},g(x,y)=x+\ii y$, and the pair $f(x,y)=(x+\ii y)(1+x-\ii y)$ (near the origin) and $g(x,y)=x+\ii y$. They both have the  derivative ratios $Df/Dg$ that are complex linear  but do not define a complex number anywhere around the origin. Nevertheless, they both yield smooth quotients. 
We invite the reader to take up the challenge of finding a sharper condition for the existence of a smooth quotient.

\section{Generalizations}
\label{sec:Generalizations}
The results of Theorems~\ref{thm:RealQuotientnD}, \ref{thm:PathDependentlHopital} and \ref{thm:ComplexQuotientnD} remain valid under any smooth change of coordinates in \(\Omega\in\RR^n\) and do not depend on the Euclidean structure of \(\RR^n\).  Consequently, the domain \(\Omega\) in these theorems can be replaced by any smooth manifold.

Additionally, the assumption of \(C^\infty\)-smoothness for the given functions \(f,g\) in all statements throughout this article can be relaxed to \(C^k\) regularity for any \(k\geq 1\), with the corresponding regularity of \(\varphi\) adjusted to \(C^{k-1}\) accordingly. In fact, the proofs we give in \autoref{app:FirstOrderTaylorExpansion} to \ref{app:proof_ ComplexQuotientnD} are all valid for the general  \(C^k\) regularity case.

\ifdefined\AMM
\begin{acknowledgment}{Acknowledgments.}
This project was funded in part by the European Research Council (ERC Consolidator
Grant 101045083 CoDiNA) and the National Science Foundation CAREER Award 2239062.
\end{acknowledgment}
\else
\begin{acknowledgment}{Acknowledgments.}
This project was funded in part by the European Research Council (ERC Consolidator
Grant 101045083 CoDiNA) and the National Science Foundation CAREER Award 2239062.
\end{acknowledgment}
\fi

\bibliographystyle{vancouver}
\bibliography{Reference}
\appendix

\section{First Order Taylor Expansion}
\label{app:FirstOrderTaylorExpansion}
The smoothness statements in Theorems~\ref{thm:RealQuotient1D}--\ref{thm:PathDependentlHopital} follow from the regularity of the remainder terms in the Taylor expansions.

Suppose \(f\colon I\to\RR\) is a continuously differentiable function of one real variable over an open interval \(I\ni a\).  Then \(f\) can be expressed using the first-order Taylor formula as \(f(x) = f(a) + f'(a)(x-a) + (x-a)h(x)\), where the function \(h\) satisfies \(\lim_{x\to a}h(x) = 0\).  This form of the Taylor remainder is known as \emph{Peano's form}, first characterized by 
Giuseppe Peano in 1889 \cite{Peano:1889:UNF,Persson:2017:HST}.   

If we further assume that \(f\in C^\infty(I)\), then the Peano remainder function \(h\) is also \(C^\infty\)-smooth.  This regularity result is known as Hadamard's Lemma, which states that \(f(x) = f(a) + (x-a)g(x)\) for some \(g\in C^\infty(I)\).  An elementary proof of Hadamard's lemma can be found in \cite{Nestruev:2003:SMO}.

More generally, if \(f\in C^k(I)\) for some \(k\geq 1\), then the first-order Peano remainder function \(h(x)\) (or \(g(x) = h(x)+f'(a)\) in the form of Hadamard's lemma) belongs to \(C^{k-1}(I)\).  This was shown by Hassler Whitney in 1943 \cite{Whitney:1943:DRT}.

\begin{lemma}[Whitney 1943 \cite{Whitney:1943:DRT}]
\label{lemma:Whitney}
    Suppose \(f\in C^k(I)\) with \(k\geq 1\) on an open interval \(I\ni 0\).  Then there exists a function \(g\in C^{k-1}(I)\) such that
    \(
        f(x) = f(0) + xg(x),
    \)     
    with \(g^{(j)}(0) = {1\over j+1}f^{(j+1)}(0)\) for \(0\leq j\leq k-1\).
\end{lemma}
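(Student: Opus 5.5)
The plan is the standard integral representation of the remainder. Since \(f\in C^1(I)\), the fundamental theorem of calculus applied to \(t\mapsto f(tx)\) on \([0,1]\) gives
\[
f(x)-f(0)=\int_0^1 \frac{d}{dt}f(tx)\,dt = x\int_0^1 f'(tx)\,dt .
\]
This is valid for every \(x\in I\), because \(I\) is an interval containing \(0\) and so the segment from \(0\) to \(x\) lies in \(I\). We therefore set
\[
g(x)\coloneqq\int_0^1 f'(tx)\,dt .
\]
Then \(f(x)=f(0)+xg(x)\) holds by construction, including at \(x=0\), where \(g(0)=f'(0)\).

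The second step is regularity. The integrand \(F(x,t)=f'(tx)\) is \(C^{k-1}\) on \(I\times[0,1]\). Its partial derivatives in \(x\) up to order \(k-1\) are \(\partial_x^j F(x,t)=t^j f^{(j+1)}(tx)\), and each of these is continuous on \(I\times[0,1]\). On any compact subinterval \(J\subset I\), they are bounded and uniformly continuous on \(J\times[0,1]\). We then apply the standard theorem on differentiation under the integral sign, inductively in \(j\), to get
\[
g^{(j)}(x)=\int_0^1 t^j f^{(j+1)}(tx)\,dt \qquad (0\le j\le k-1).
\]
Continuity of \(g^{(j)}\) on \(J\), and hence on \(I\), follows from uniform continuity of the integrand. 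So \(g\in C^{k-1}(I)\).

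The third step evaluates these formulas at \(x=0\):
\[
g^{(j)}(0)=f^{(j+1)}(0)\int_0^1 t^j\,dt=\frac{1}{j+1}f^{(j+1)}(0),
\]
which is the claimed identity.

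The only point that needs care is justifying differentiation under the integral with merely \(C^{k-1}\) control. For the top order \(j=k-1\) we only need continuity of \(g^{(k-1)}\), not a further derivative. Because \([0,1]\) is compact, continuity of the integrand on \(J\times[0,1]\) suffices for every step, so I expect no genuine obstacle. The case \(k=\infty\) follows by applying the finite-\(k\) argument for every \(k\).
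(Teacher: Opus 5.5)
Your proof is correct, and it takes a somewhat different route from the paper's. The paper defines \(g(x)=(f(x)-f(0))/x=\frac1x\int_0^x f'(t)\,dt\) only for \(x\neq0\). It asserts, omitting an integration-by-parts induction, that \(g^{(j)}(x)=x^{-(j+1)}\int_0^x t^jf^{(j+1)}(t)\,dt\) on \(I\setminus\{0\}\). It then shows \(g^{(j)}(x)\to\frac{1}{j+1}f^{(j+1)}(0)\) as \(x\to0\), by bounding the difference with \(\max_{t\in[0,x]}|f^{(j+1)}(t)-f^{(j+1)}(0)|\). Your formula \(g^{(j)}(x)=\int_0^1t^jf^{(j+1)}(tx)\,dt\) is the paper's expression rescaled to the fixed interval \([0,1]\). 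Because your parametrized integral is defined on all of \(I\), differentiation under the integral sign gives existence and continuity of every \(g^{(j)}\), \(j\le k-1\), at \(x=0\) by the same argument as at any other point, and the value at \(0\) is simply read off. This removes two steps the paper leaves implicit. The first is the omitted induction. The second is the fact that a function continuous at \(0\) whose derivative has a limit at \(0\) is differentiable there with that derivative (a mean value theorem argument); the paper needs this to pass from ``each \(g^{(j)}\) extends continuously to \(0\)'' to \(g\in C^{k-1}(I)\). In exchange, the paper's route uses only one-variable calculus on \(I\setminus\{0\}\) rather than the Leibniz rule, though its quantitative bound follows just as directly from your formula. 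Your version also carries over verbatim to several variables. It is exactly the construction the paper itself uses later for \(\bP\) and \(\bQ\) in the proof of Theorem~\ref{thm:ComplexQuotientnD}.
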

\begin{proof}
    For \(x\neq 0\), the finite difference quotient \(g = {f(x) - f(0)\over x}\) can be written as \(g(x) = {1\over x}\int_0^xf'(t)\, dt\).   The derivatives of \(g(x)\) admit the following integral formula:
    \begin{align*}
        {d^j\over dx^j}g(x) = {1\over x^{j+1}}\int_0^x t^j f^{(j+1)}(t)\, dt,\quad j=0,\ldots,k-1.
    \end{align*}
    This identity can be verified by repeated integration by parts and mathematical induction; we omit the details here.  
    Since \(f\in C^k(I)\), it follows that \(g\in C^k(I\setminus\{0\})\subset C^{k-1}(I\setminus\{0\})\).
    It remains to show that \(g^{(j)}(x)\) has a limit as \(x\to 0\), for \(0\leq j\leq k-1\).
    Define \(L_j \coloneqq {1\over j+1}f^{(j+1)}(0)\), which can also be expressed as \(L_j = {1\over x^{j+1}}\int_0^x t^jf^{(j+1)}(0)\, dt \) for any \(x\neq 0\).  Then we compute:
    \begin{align*}
        \left|g^{(j)}(x) - L\right| & = 
        \left|{1\over x^{j+1}}\int_0^x t^j\left(f^{(j+1)}(t) - f^{(j+1)}(0)\right)\, dt\right|\\
        &\leq \underbrace{{1\over x^{j+1}}\int_0^x t^j\, dt}_{{1\over j+1}} \max_{t\in [0,x]}\left|f^{(j+1)}(t) - f^{(j+1)}(0)\right|.
    \end{align*}
    Since \(f^{(j+1)}\) is continuous at \(0\), this expression tends to zero as \(x\to 0\).  Therefore, each \(g^{(j)}\) extends continuously to \(x=0\), with \(g^{(j)}(0) = L_j\).  Hence, \(g\in C^{k-1}(I)\).
\end{proof}

Whitney also extended the result of \Cref{lemma:Whitney} to multidimensions in \cite{Whitney:1943:DRT}.  Suppose \(f\colon\RR^n\to\RR\) is defined in a neighborhood of the origin.  Then by fixing each \(x_2,\ldots,x_n\) and applying the same argument as \Cref{lemma:Whitney} on the variable \(x_1\), we obtain
\begin{align}
\label{eq:WhitneyTowardsMultidiemsion}
        f(x_1,x_2,\ldots,x_n) = f(0,x_2,\ldots,x_n) + x_1 g_1(x_1,x_2,\ldots,x_n)
\end{align}
where \(f(0,x_2,\ldots,x_n)\) is still \(C^k\) and \(g_1\in C^{k-1}\).  The continuity of \({\partial^{i_1+\cdots+i_n}\over\partial x_1^{i_1}\cdots \partial x_n^{i_n}}g_1\) for \(i_1+\cdots+i_n\leq{k-1}\) is the result of applying \Cref{lemma:Whitney} on the function \({\partial^{i_2+\cdots +i_n}\over\partial x_2^{i_2}\cdots\partial x_n^{i_n}}f\).

Repeating the process of \eqref{eq:WhitneyTowardsMultidiemsion} yields
\begin{align}
\label{eq:WhitneyMultidimensionScalar}
        f(\bx) = f(\bzero) + x_1g_1(x_1,\ldots,x_n) + x_2g_2(x_2,\ldots,x_n)+\cdots+ x_n g_n(x_n)
\end{align}
where \(g_i\in C^{k-1}\) and \(g_i(\bzero) = {\partial f\over\partial x_i}(\bzero)\).

The result \eqref{eq:WhitneyMultidimensionScalar} also holds for vector-valued functions \(\bff = \begin{bsmallmatrix}
    f_1\\|\\f_m
\end{bsmallmatrix}\colon\RR^n\to\RR^m\) by applying \eqref{eq:WhitneyMultidimensionScalar} component-wise, which is concluded by the following lemma.
In the lemma we single out the factors \(\sum_{i}{\partial\bff\over\partial x_i}(\bzero)x_i = D\bff(\bzero)\bx\) to resemble Peano's form of Taylor expansion.

\begin{lemma}[First-order Taylor formula with regularity]
\label{lemma:Taylor}
Let \(\bff\colon U\subset\RR^n\to\RR^m\) be a \(C^k\)-continuous vector-valued function on a neighborhood \(U\) of a given point $\ba\in \RR^n$.
Then there exists \(\bH\in C^{k-1}(U;\RR^{m\times n})\) matrix-valued function so that \(\bH(\ba) = \bzero\) and
\begin{align*}
    \bff(\bx) = \bff(\ba) + D\bff(\ba)(\bx-\ba) + \bH(\bx)(\bx-\ba),\quad\bx\in U.
\end{align*}
\end{lemma}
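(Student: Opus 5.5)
We reduce to the case $\ba=\bzero$ by translating, and we work on a convex neighborhood $U$ of $\ba$; shrinking $U$ to a ball around $\ba$ is harmless for a local statement of this kind. We first treat the scalar case $m=1$. There we use the multidimensional Whitney decomposition \eqref{eq:WhitneyMultidimensionScalar}, obtained by applying \Cref{lemma:Whitney} repeatedly one variable at a time:
\begin{align*}
f(\bx) = f(\bzero) + \sum_{i=1}^n x_i g_i(x_i,\ldots,x_n),
\end{align*}
where each $g_i\in C^{k-1}$ and $g_i(\bzero) = \partial f/\partial x_i(\bzero)$.

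Next we define the row vector $\bH(\bx)\in\RR^{1\times n}$ by
\begin{align*}
\bH_{i}(\bx) \coloneqq g_i(x_i,\ldots,x_n) - \frac{\partial f}{\partial x_i}(\bzero).
\end{align*}
Substituting into the decomposition gives exactly
\begin{align*}
f(\bx) = f(\bzero) + Df(\bzero)\bx + \bH(\bx)\bx.
\end{align*}
Each $\bH_i$ is $C^{k-1}$, since it is a $C^{k-1}$ function of a subset of the coordinates. Moreover $\bH(\bzero)=\bzero$ by the value $g_i(\bzero)=\partial_i f(\bzero)$, which is the $j=0$ case of \Cref{lemma:Whitney}.

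For vector-valued $\bff$ we apply the scalar construction to each component $f_\ell$ and stack the resulting rows into $\bH\in C^{k-1}(U;\RR^{m\times n})$. Since $D\bff(\ba)$ is the stack of the rows $Df_\ell(\ba)$, the stated identity holds row by row.

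The main obstacle is justifying the regularity of the $g_i$ in the iterated decomposition. The function $g_1$ is defined via \Cref{lemma:Whitney} in $x_1$ with the other variables as parameters, so we must check that its mixed partial derivatives up to order $k-1$ exist and are jointly continuous. The cleanest route is the integral representation
\begin{align*}
g_1(\bx)=\int_0^1 \partial_1 f(sx_1,x_2,\ldots,x_n)\,ds,
\end{align*}
which is valid on a convex neighborhood (for instance a ball). The integrand is a $C^{k-1}$ function of $(s,\bx)$ on the compact range $s\in[0,1]$, so we may differentiate under the integral sign up to order $k-1$. This yields $g_1\in C^{k-1}$ together with $g_1(\bzero)=\partial_1 f(\bzero)$.

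The remainder $f(0,x_2,\ldots,x_n)$ is still $C^k$ in $(x_2,\ldots,x_n)$, so induction on $n$ completes the argument. Alternatively, we can bypass the iteration entirely and set
\begin{align*}
\bH(\bx)=\int_0^1 \big(D\bff(t\bx)-D\bff(\bzero)\big)\,dt.
\end{align*}
By the fundamental theorem of calculus along the segment from $\bzero$ to $\bx$, this gives the identity directly. The same differentiation-under-the-integral argument shows $\bH\in C^{k-1}$, and plainly $\bH(\bzero)=\bzero$. I would present this direct formula as the proof.
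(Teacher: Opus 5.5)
Your proposal is correct, and the version you would actually present takes a different route from the paper's. The paper obtains \Cref{lemma:Taylor} only from the iterated one-variable decomposition \eqref{eq:WhitneyMultidimensionScalar} built on \Cref{lemma:Whitney}: it subtracts $g_i(\bzero)=\partial_i f(\bzero)$ and stacks components, which is exactly your first sketch. There the joint $C^{k-1}$ regularity of $g_1$ is only asserted, by applying \Cref{lemma:Whitney} to partial derivatives in $x_2,\dots,x_n$. Your representation $g_1(\bx)=\int_0^1\partial_1 f(sx_1,x_2,\dots,x_n)\,ds$ makes that step transparent.

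Your direct formula $\bH(\bx)=\int_0^1\bigl(D\bff(t\bx)-D\bff(\bzero)\bigr)\,dt$ dispenses with the induction, the one-dimensional lemma and the componentwise stacking. It is coordinate-free and treats vector-valued $\bff$ at once. Differentiating under the integral gives $\partial^\alpha\bH(\bx)=\int_0^1 t^{|\alpha|}(\partial^\alpha D\bff)(t\bx)\,dt$ for $1\le|\alpha|\le k-1$, so continuity is immediate. It is in fact the same device the paper uses later to construct $\bP$ and $\bQ$ in the Claim within the proof of \autoref{thm:ComplexQuotientnD}.

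What the paper's route buys in return is the intermediate slice-wise factorization \eqref{eq:WhitneyTowardsMultidiemsion} and the triangular structure ($g_i$ depends only on $x_i,\dots,x_n$). That factorization is what the proof of \autoref{thm:RealQuotientnD} actually uses.

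Both routes need a suitably shaped neighborhood: star-shaped about $\ba$ for yours, box-like for the slicing. So your shrinking of $U$ costs nothing relative to the paper. If you want the identity on the whole given $U$, glue your ball-based $\bH$ via a cutoff (supported in the ball, equal to $1$ near $\ba$) with the $C^k$ choice $r(\bx)(\bx-\ba)^{T}/|\bx-\ba|^2$ on $U\setminus\{\ba\}$, where $r(\bx)=\bff(\bx)-\bff(\ba)-D\bff(\ba)(\bx-\ba)$. The identity is affine in $\bH$, so the glued function still satisfies it, and it vanishes at $\ba$.
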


\section{Proof of Theorem \ref{thm:RealQuotient1D}}
\label{app:ProofOfRealQuotient1D}

\begin{proof}[Proof of \autoref{thm:RealQuotient1D}]
    Let \(\Gamma\) be the zero set of \(f\in C^k(I)\) and \(g\in C^k(I)\).  For \(x\in I\setminus\Gamma\) we define \(\varphi\) by \(\varphi(x) = f(x)/g(x)\), which belongs to \(C^{k}(I\setminus\Gamma)\subset C^{k-1}(I\setminus\Gamma)\) and non-vanishing.  It remains to show that \(\varphi\) is \(C^{k-1}\)-smoothly defined in a neighborhood of \(\Gamma\).  
    By \Cref{lemma:Whitney}, near a simple zero \(a \in \Gamma\) of both \(f\) and \(g\), we can express  
\[
f(x) = f'(a)(x-a)F(x), \quad g(x) = g'(a)(x-a)G(x)
\]
for \(x\) in a neighborhood \(U\) of \(a\), where \(F,G\in C^{k-1}(U)\), \(F(a) = G(a) = 1\), and both \(F(x)\) and \(G(x)\) are nonzero on \(U\). 
Thus, for \(x \in U\), we define  
\[
\varphi(x) \coloneqq \frac{f'(a)}{g'(a)} \cdot \frac{F(x)}{G(x)},
\]
which is smooth, non-vanishing,  $\varphi(a)=f'(a)/g'(a)$, and agrees with \(f(x)/g(x)\) on \(U\setminus\{a\}\).  
\end{proof}

\section{Proof of Theorem \ref{thm:RealQuotientnD}}
\label{app:ProofOfRealQuotientnD}
\begin{proof}[Proof of \autoref{thm:RealQuotientnD}]
    Let \(\Sigma\) be the common zero set of \(f,g\in C^k(\Omega)\).  On \(\Omega\setminus\Sigma\) define \(\bx\in\Omega\setminus\Sigma\) define \(\varphi(\bx) = f(\bx)/g(\bx)\), which belongs to \(C^{k}(\Omega\setminus\Sigma)\) and is non-vanishing.
    It remains to show that \(\varphi\) is \(C^{k-1}\)-continuously defined in a neighborhood of any point of \(\Sigma\).  Let \(\ba\in\Sigma\).  By the implicit function theorem, there exists a neighborhood \(U\) of \(\ba\), a neighborhood \(\tilde U\subset\RR^n\) of the origin \(\bzero\in\RR^n\), and a \(C^k\) bijection \(\Phi\colon \tilde U\xrightarrow{\simeq} U\) such that \(\Phi(\bzero) = \ba\) and \(\tilde\Sigma = \Phi^{-1}(\Sigma)\) is the coordinate hyperplane \(\tilde\Sigma = \{(x_1,\ldots, x_n)\,|\,x_1 = 0\}\cap\tilde U\).
    Define \(\tilde f = f\circ\Phi\) and \(\tilde g = g\circ\Phi\), both of which are \(C^k\) functions on \(\tilde U\) and have simple zeros on \(\tilde\Sigma\).  
    In this coordinate system, \(\nabla \tilde f(0,x_2,\ldots,x_n) = {\partial \tilde f\over\partial x_1}(0,x_2,\ldots,x_n)\be_1\) and \(\nabla \tilde g(0,x_2,\ldots,x_n) = {\partial \tilde g\over\partial x_1}(0,x_2,\ldots,x_n)\be_1\), where \(\be_1\) is the coordinate vector \((1,0,\ldots,0)\), and \({\partial\tilde f\over\partial x_1}\) and \({\partial\tilde g\over\partial x_1}\) are nonvanishing \(C^{k-1}\) function on \(\tilde\Sigma\).
    By \eqref{eq:WhitneyTowardsMultidiemsion}, there exists \(p,q\in C^{k-1}(\tilde U)\) such that for \((x_1,\ldots,x_n)\in \tilde U\)
    \begin{align*}
        \tilde f(x_1,\ldots,x_n) = x_1 p(x_1,\ldots,x_n),\quad\tilde g(x_1,\ldots,x_n) = x_1q(x_1,\ldots,x_n)
    \end{align*}
    and \(p = {\partial\tilde f\over\partial x_1}\neq 0\) and \(q = {\partial\tilde g\over\partial x_1}\neq 0\) on \(\tilde\Sigma\).
    Therefore there exists a neighborhood of \(\tilde\Sigma\), without loss of generality \(\tilde U\), on which \(p\) and \(q\) are nonvanishing.  Therefore, \({\tilde f\over\tilde g} = {p\over q}\) is a nonvanishing \(C^{k-1}\) continuous function on \(\tilde U\) and agrees with \(\varphi\circ \Phi\) on \(\tilde U\).  Thus \(\varphi\) extends to a \(C^{k-1}\)-continuous function in a neighborhood of \(\ba\).
\end{proof}

\section{Proof of Theorem \ref{thm:PathDependentlHopital}}\label{app:proof_PathDependentlHopital}

To prove \Cref{thm:PathDependentlHopital} we generalize \Cref{lemma:Whitney} and \Cref{thm:RealQuotient1D} that apply to complex-valued functions.


\begin{lemma}
\label{lemma:ComplexWhitney}
    Suppose \(f\in C^k(I,\CC)\) with \(k\geq 1\) on an open interval \(I\ni 0\).  Then there exists a function \(g\in C^{k-1}(I,\CC)\)
    \(
        f(x) = f(0) + xg(x),
    \)     
    with \(g^{(j)}(0) = {1\over j+1}f^{(j+1)}(0)\) for \(0\leq j\leq k-1\).
\end{lemma}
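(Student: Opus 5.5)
The plan is to reduce \Cref{lemma:ComplexWhitney} to the real-valued \Cref{lemma:Whitney} by splitting into real and imaginary parts. Write \(f = f_1 + \ii f_2\) with \(f_1 = \Re f\) and \(f_2 = \Im f\). Since \(f\in C^k(I,\CC)\) means exactly that both \(f_1\) and \(f_2\) belong to \(C^k(I)\), \Cref{lemma:Whitney} applies to each of them separately. This yields \(g_1,g_2\in C^{k-1}(I)\) with \(f_r(x) = f_r(0) + x g_r(x)\) and \(g_r^{(j)}(0) = \frac{1}{j+1}f_r^{(j+1)}(0)\) for \(r=1,2\) and \(0\le j\le k-1\).

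Next I set \(g \coloneqq g_1 + \ii g_2\). Then \(g\in C^{k-1}(I,\CC)\), and by linearity
\[
f(x) = f_1(0)+\ii f_2(0) + x\bigl(g_1(x)+\ii g_2(x)\bigr) = f(0) + x g(x).
\]
Differentiation commutes with taking real and imaginary parts, so \(g^{(j)}(0) = g_1^{(j)}(0)+\ii g_2^{(j)}(0) = \frac{1}{j+1}\bigl(f_1^{(j+1)}(0)+\ii f_2^{(j+1)}(0)\bigr) = \frac{1}{j+1}f^{(j+1)}(0)\), which gives the stated derivative values.

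An alternative is to repeat the integral argument of \Cref{lemma:Whitney} directly with complex-valued integrands. In that version one uses \(g(x) = \frac1x\int_0^x f'(t)\,dt\) together with the same estimate, since \(|\cdot|\) on \(\CC\) satisfies the triangle inequality for integrals. There is no real obstacle in either route. The only point that needs care is the remark that \(C^k\)-regularity of a complex-valued function is equivalent to componentwise \(C^k\)-regularity, and that the value at \(0\) (where \(g\) is defined by continuous extension) is consistent across the two components. Both facts are immediate.
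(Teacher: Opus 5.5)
Your proposal is correct and takes the same approach as the paper: apply \Cref{lemma:Whitney} separately to \(\Re f\) and \(\Im f\), then set \(g = g_1 + \ii g_2\). The paper states this in a single line, and your version just spells out the linearity and derivative bookkeeping.
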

\begin{proof}
We apply \Cref{lemma:Whitney} to the real and imaginary parts of \(f\) to obtain the real and imaginary parts of \(g\) holding the stated properties.
\end{proof}

\begin{lemma}[Smooth quotient of complex functions on a real line]\label{lem:ComplexQuotient1D}
     Let \(f,g\in C^\infty(I,\CC)\) be smooth complex-valued functions on a real interval \(I\). Suppose \(f,g\) share common zeros, and their derivatives $f'$ and $g'$ do not vanish on the zero set. Then there exists a non-vanishing smooth complex-valued function \(\varphi\in C^\infty(I,\CC)\) such that \(f = g\varphi\).
\end{lemma}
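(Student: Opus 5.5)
The plan is to follow the proof of \autoref{thm:RealQuotient1D} almost verbatim, replacing \Cref{lemma:Whitney} by its complex-valued version \Cref{lemma:ComplexWhitney}. As in the rest of the paper, we work with \(f,g\in C^k(I,\CC)\) and aim for \(\varphi\in C^{k-1}(I,\CC)\); the \(C^\infty\) statement is the case \(k=\infty\).

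First we handle the complement of the common zero set \(\Gamma\). On \(I\setminus\Gamma\) we set \(\varphi\coloneqq f/g\). Since \(g\) does not vanish there, complex division is smooth, so \(\varphi\in C^k(I\setminus\Gamma,\CC)\). It is also nonvanishing there, because \(\Gamma\) is the zero set of \(f\) as well. It remains to extend \(\varphi\) across each point \(a\in\Gamma\).

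We observe that the zeros are isolated. Indeed, \(f(x)=f'(a)(x-a)+o(x-a)\) with \(f'(a)\neq0\) forces \(f\neq0\) on a punctured neighborhood of \(a\), so the local definitions below patch together consistently with \(f/g\).

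Fix \(a\in\Gamma\). After translating, we apply \Cref{lemma:ComplexWhitney} to \(f\) and \(g\) at \(a\). This gives \(f(x)=(x-a)p(x)\) and \(g(x)=(x-a)q(x)\) with \(p,q\in C^{k-1}(I,\CC)\), \(p(a)=f'(a)\neq0\) and \(q(a)=g'(a)\neq0\). By continuity there is a neighborhood \(U\ni a\) on which \(p\) and \(q\) are both nonvanishing. On \(U\) we define \(\varphi\coloneqq p/q\). This is \(C^{k-1}\) and nonvanishing, and it agrees with \(f/g\) on \(U\setminus\{a\}\). At the zero it takes the value \(\varphi(a)=f'(a)/g'(a)\), where the division is complex division. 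Combined with the first step, this produces a global nonvanishing \(\varphi\) with \(f=g\varphi\) on all of \(I\).

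There is essentially no obstacle here. The one point needing care is that the factorization must be carried out in complex arithmetic, so that \(p/q\) is a genuine complex quotient. This works because in one real variable the factor \(x-a\) is real, so it cancels as a scalar common to \(p\) and \(q\), with no matrix ratio \(Df/Dg\) appearing. That is exactly why the one-dimensional complex case behaves like the real case, in contrast to \autoref{thm:ComplexQuotientnD}.
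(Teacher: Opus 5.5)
Your proposal is correct and matches the paper's proof, which reruns the argument for Theorem~\ref{thm:RealQuotient1D} with Lemma~\ref{lemma:Whitney} replaced by its complex-valued version, Lemma~\ref{lemma:ComplexWhitney}. Your additional remarks are accurate and make explicit what the paper leaves implicit: the zeros are isolated, and the real scalar factor \(x-a\) cancels in complex arithmetic.
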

\begin{proof}
    The proof follows the same steps as the proof of \Cref{thm:RealQuotient1D} (\Cref{app:ProofOfRealQuotient1D}), with \Cref{lemma:Whitney} replaced by \Cref{lemma:ComplexWhitney}.
\end{proof}

\begin{proof}[Proof of Theorem \ref{thm:PathDependentlHopital}]
    Consider smooth functions $f_\gamma(t)\coloneqq f(\gamma(t))$ and $f_\gamma(t)=g(\gamma(t))$ on the interval $(-\varepsilon,\varepsilon)$. 
    By \Cref{lem:ComplexQuotient1D},
    the function $\varphi\coloneqq  f_\gamma/ g_\gamma$ is smooth and nowhere vanishing, with $\varphi(0)=f'_\gamma(0)/g'_\gamma(0)$.
    Noting that $g_\gamma'(0)=Dg|_{\gamma(0)}\gamma'(0)$ and $f_\gamma'(0)=Df|_{\gamma(0)}\gamma'(0)= \frac{Df}{Dg} g_\gamma'(0)$, we obtain the stated expression.
\end{proof}

\section{Proof of Theorem \ref{thm:ComplexQuotientnD}}\label{app:proof_ ComplexQuotientnD}

On a 2D domain, \autoref{thm:ComplexQuotientnD} is a direct consequence of \autoref{thm:PathDependentlHopital}. 
If \(f\) and \(g\) are complex linearly related, 
with  the quotient of derivative
\(({Df\over Dg}) = \begin{bsmallmatrix}
    u&-v\\
    v & u
\end{bsmallmatrix}\) with some $u,v\in \RR$ at each zero, 
then \eqref{eq:PathDependentlHopital} simplifies to 
\begin{align*}
    \lim_{t=0}{f(\gamma(t))\over g(\gamma(t))} = {1\over g'}(u+\ii v)g' = u+\ii v
\end{align*}
which is a single complex number that depends only on the point \(\gamma(0)\) and is independent of the path direction \(\gamma'\). This gives a proof of \autoref{thm:ComplexQuotientnD} in 2D.

In higher dimensions, however, \autoref{thm:ComplexQuotientnD} does not automatically follow from \Cref{thm:PathDependentlHopital}.
Let us consider any 2D plane which transversely intersects $\Gamma$, on which the situation reduces to the 2D setting discussed above. Hence the quotient function $\varphi$ is continuous along any path that intersects $\Gamma$ transversely. In addition, $\varphi$ is smooth when restricted either to $\varphi|_{\Gamma}$ or to $\varphi_{\Omega\setminus\Gamma}$ by construction. These properties, however, are not sufficient to conclude the continuity of $\varphi$. In fact, we can construct a function that satisfies all these properties that is still undefined on $\Gamma$, as the following example illustrates. 
\newline
%
\begingroup
\setlength{\intextsep}{10pt}%
\setlength{\columnsep}{10pt}%
\begin{wrapfigure}{r}{0.2\textwidth}
      \begin{picture}(0,150)
        \put(0,0){\includegraphics[width=.2\columnwidth,trim={13cm 1cm 13cm 0}, clip]{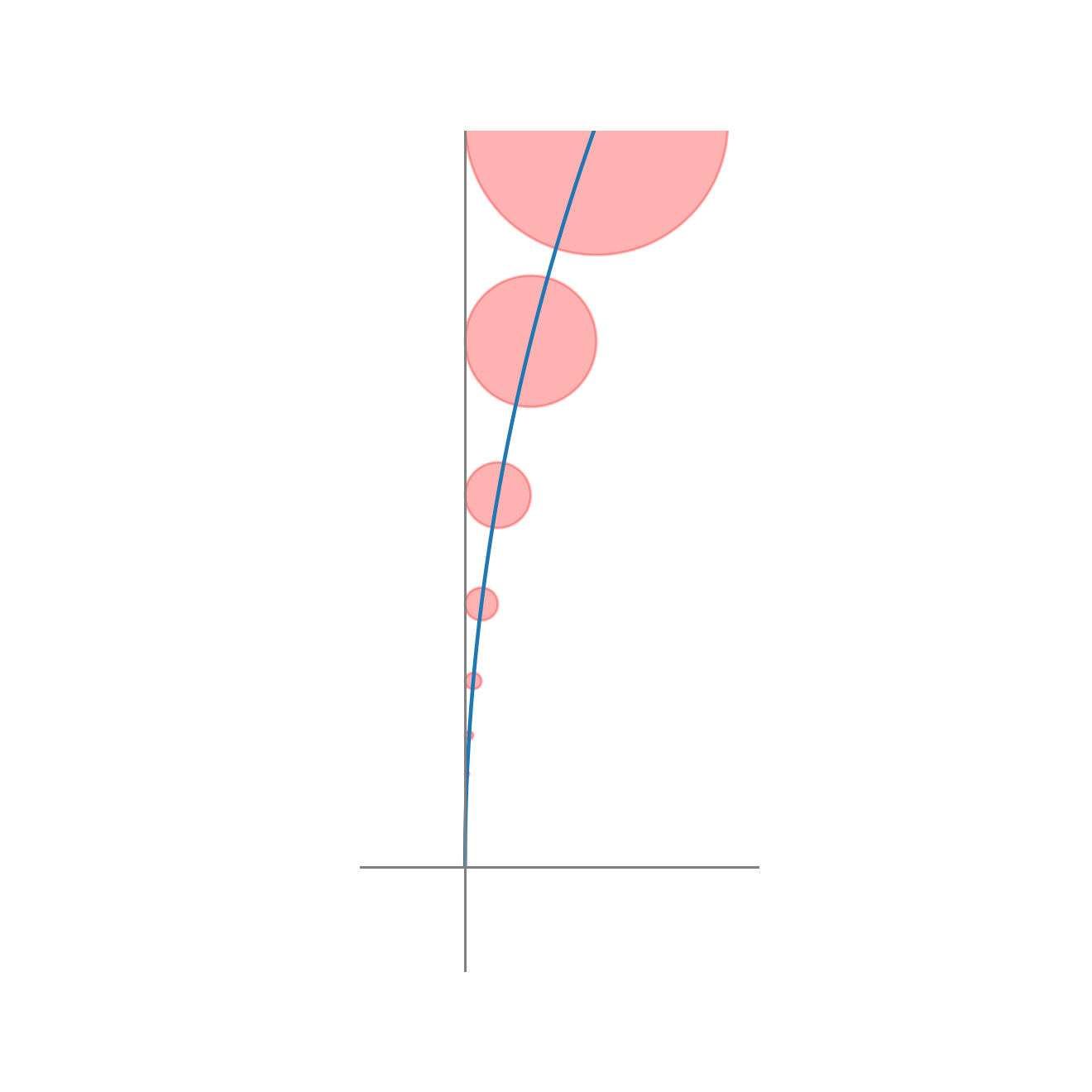}}
        \put(45,22){\small\(x\)}
        \put(16,80){\small\(z\)}
    \end{picture}
\end{wrapfigure}
\begin{example}\label{eg:infinite_bump_functions}
    Consider a collection of smooth bump functions $\{b_n\}_{n\in \NN}$ on $\RR^3$ such that each $b_n$ takes its maximum value $1$ at the center $(2^{-n},0, 2^{1-{n\over 2}})$ of its support, given as an open ball of radius $2^{-n}$. Their centers lie on the curve $z=2\sqrt{x}$, and the supports touch $z$-axis, as visualized in the right image.
    Using these bump functions, let us define $\varphi(x,y,z)= \sum_{n=0}^\infty b_n(x,y,z)$. 
    Notice that any path intersecting $z$-axis transversely passes through only finitely many of the above balls. Hence, along such paths $\gamma:(-\epsilon,\epsilon)\to \RR^3$ with $\gamma(0) = \mathbf{0}$, we have $\lim_{t\to0} \varphi(\gamma(t))=0$. However, any curve $(t,0,a \sqrt{t})$ with $a>2$, intersects these balls infinitely many times as $t\to 0$, making $\varphi$ discontinuous at the origin.
\end{example}
 \endgroup
 We now prove \autoref{thm:ComplexQuotientnD} in arbitrary dimension. The key component is the simple zero condition of $f$ and $g$. It ensures that the leading order of these functions on $\Omega\setminus \Gamma$ consists of the coordinates perpendicular to $\Gamma$. This results in the continuity of the quotient along any path approaching to $\Gamma$, possibly non-transversely, as in Example \ref{eg:infinite_bump_functions}.

\begin{proof}[Proof of \autoref{thm:ComplexQuotientnD}]
{$(i)\to (ii)$.} 
It follows from the $C^1$ regularity of $g$ and the continuity of $\varphi$ that $Df(\ba)=\varphi(\ba)Dg(\ba)$ at each zero $\ba$. This shows that \(Df/Dg = \begin{bsmallmatrix}
    \Re\varphi(\ba)&-\Im\varphi(\ba)\\\Im\varphi(\ba)&\Re\varphi(\ba)
\end{bsmallmatrix}\) and therefore \(f\) and \(g\) are complex linearly related.


\vspace{5pt}
\noindent $(ii)\to (i)$.
    Let \(\Gamma\) be the common zero set of \(f,g\in C^1(\Omega;\CC)\).  
    The quotient \(\varphi(\bx) = f(\bx)/g(\bx)\) for \(\bx\in \Omega\setminus\Gamma\), is \(C^1\)-continuous and nonvanishing.
    It remains to show that \(f/g\) extends to a \(C^{0}\)-function in a neighborhood of any point \(\ba\in \Gamma\).  


    As in the proof of \autoref{thm:RealQuotientnD}, the implicit function theorem assures the existence of a neighborhood \(U\) of \(\ba\), a neighborhood \(\tilde U\) of the origin \(\bzero\in\RR^n\), and a \(C^1\) bijection \(\Phi\colon\tilde U\xrightarrow{\simeq} U\) such that \(\Phi(\bzero) = \ba\) and \(\tilde\Gamma = \Phi^{-1}(\Gamma)\) is the subspace \(\tilde\Gamma = \{\bx=(x_1,x_2,\ldots,x_d)\,|\,  x_1=x_2=0\}\cap\tilde U\). Let us write $\bx^\perp=(x_1,x_2)$ and $\bx^\parallel=(x_3,\ldots,x_n)$ for simplicity.
    Now let \(\tilde f = f\circ\Phi\) and \(\tilde g = g\circ\Phi\), which are \(C^1\) functions vanishing on \(\tilde\Gamma\).

\begin{claim}\label{cl:P and Q}
There exists \(\bP,\bQ\in C^{0}(\tilde U;\RR^{2\times 2})\) such that 
    \begin{align}\label{eq:P and Q}
        \tilde f(\bx) = \bP(\bx)\bx^\perp,\quad
        \tilde g(\bx) = 
        \bQ(\bx)\bx^\perp,
    \end{align}
    with $\bP(\bzero,\bx^\parallel)=\nabla_{\bx^\perp}\tilde f(\bzero,\bx^\parallel)$ and $\bQ(\bzero,\bx^\parallel)=\nabla_{\bx^\perp}\tilde g(\bzero,\bx^\parallel)$ being invertible, where the equalities  in \eqref{eq:P and Q} are the natural identification between $\CC\cong \RR^2$. 
\end{claim}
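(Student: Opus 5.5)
We prove the Claim with a fundamental-theorem-of-calculus (Hadamard-type) representation in the normal variables \(\bx^\perp\), applied to \(\tilde f\) and \(\tilde g\) seen as \(\RR^2\)-valued functions. Shrink \(\tilde U\) if necessary so that it is convex in the \(\bx^\perp\) directions. That is, whenever \((\bx^\perp,\bx^\parallel)\in\tilde U\), the whole segment \((s\bx^\perp,\bx^\parallel)\), \(s\in[0,1]\), also lies in \(\tilde U\); a small ball or box around \(\bzero\) will do. Since \(\tilde f(\bzero,\bx^\parallel)=0\) for all \(\bx^\parallel\), we have
\begin{align*}
\tilde f(\bx^\perp,\bx^\parallel) = \int_0^1 \frac{d}{ds}\tilde f(s\bx^\perp,\bx^\parallel)\,ds = \left(\int_0^1 \nabla_{\bx^\perp}\tilde f(s\bx^\perp,\bx^\parallel)\,ds\right)\bx^\perp .
\end{align*}
Here \(\nabla_{\bx^\perp}\tilde f\) denotes the real \(2\times 2\) Jacobian of \(\tilde f\cong(\Re\tilde f,\Im\tilde f)\) with respect to \((x_1,x_2)\). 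We then define \(\bP(\bx)\coloneqq\int_0^1 \nabla_{\bx^\perp}\tilde f(s\bx^\perp,\bx^\parallel)\,ds\), and define \(\bQ\) in the same way from \(\tilde g\). The representation \eqref{eq:P and Q} then holds by construction.

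\textbf{Continuity and boundary values.} The integrand is continuous on the compact set \([0,1]\times K\) for any compact \(K\subset\tilde U\), because \(\tilde f\in C^1\), being the composition of the \(C^1\) maps \(f\) and \(\Phi\). It is therefore uniformly continuous there, so \(\bP\) is continuous by the standard continuity of parameter-dependent integrals. The same argument gives continuity of \(\bQ\). Setting \(\bx^\perp=\bzero\), the integrand no longer depends on \(s\), which gives \(\bP(\bzero,\bx^\parallel)=\nabla_{\bx^\perp}\tilde f(\bzero,\bx^\parallel)\) and likewise for \(\bQ\). This is the multidimensional analogue of \eqref{eq:WhitneyTowardsMultidiemsion}. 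We use the integral form directly rather than iterating coordinate by coordinate, because the integral form delivers the matrix form with the right diagonal values in one step.

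\textbf{Invertibility.} On \(\tilde\Gamma\), \(\tilde f\) vanishes identically along \(\bx^\parallel\), so \(\nabla_{\bx^\parallel}\tilde f(\bzero,\bx^\parallel)=0\). The full derivative is \(D\tilde f=Df|_{\Phi(\bx)}\,D\Phi\), where \(D\Phi\) is invertible and \(Df\) has rank 2 by the simple-zero hypothesis, so \(D\tilde f\) has rank 2 on \(\tilde\Gamma\). Since its \(\bx^\parallel\) block is zero, all of that rank must come from the \(2\times2\) block \(\nabla_{\bx^\perp}\tilde f\), which is therefore invertible; the same holds for \(\tilde g\).

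\textbf{Main obstacle.} The only delicate point is having the domain convex in \(\bx^\perp\) so that the segment integral makes sense, together with the uniform continuity needed for \(\bP\) and \(\bQ\) to be continuous. Both are handled by shrinking \(\tilde U\) to a product neighborhood of \(\bzero\). We also record that, by continuity, \(\bP\) and \(\bQ\) remain invertible on a possibly smaller neighborhood, which is what the subsequent argument will need.
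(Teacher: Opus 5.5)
Your proposal is correct and follows the paper's own proof: the paper also uses the fundamental theorem of calculus along the segment in the normal directions to define $\bP(\bx)=\int_0^1\nabla_{\bx^\perp}\tilde f(t\bx^\perp,\bx^\parallel)\,dt$, and $\bQ$ in the same way. Your two additions only make explicit what the paper leaves implicit. These are shrinking $\tilde U$ so that it contains the segments $(s\bx^\perp,\bx^\parallel)$, and deducing invertibility from the fact that the rank-2 map $D\tilde f = Df\,D\Phi$ has vanishing $\bx^\parallel$-block on $\tilde\Gamma$.
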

We will prove this claim at the end.
Then from the continuity of $\bP,\bQ$ and the assumption that \(f\) and \(g\) are complex linearly related, it follows that there exists a continuous complex-valued function \( \lambda\) on $\tilde \Gamma$ such that
     \(\bP = \Lambda\bQ\) on \(\tilde\Gamma\) with the matrix-valued function  $\Lambda=\begin{bsmallmatrix}
         \Re\lambda & -\Im\lambda\\\Im\lambda&\Re\lambda
     \end{bsmallmatrix}$.
    %
Using this, we define a function $\tilde \varphi$ on $\tilde \Gamma$ by
    \begin{align*}
    \tilde\varphi(\bx)=
    \begin{cases}
			\tilde f(\bx)/\tilde g(\bx), & \bx \notin \tilde\Gamma,\\
            \lambda(\bx^\parallel), &  \bx \in \tilde \Gamma
		 \end{cases}
\end{align*}
and show that it is continuous at $\bzero\in \tilde \Gamma$.

Since $\bQ$ is invertible on $\tilde \Gamma$, namely on a small region around $\bzero$, there is a strictly positive constant $c$ such that $|\bQ(\bx)\bx^\perp|\geq c |\bx^\perp|$ in some open neighborhood $U'$ with $\overline{U'}\subsetneq \tilde U$.
Then, we have for $\bx\in U'\setminus\tilde \Gamma$, 
    \begin{align*}
        \left|\tilde\varphi(\bx)-\tilde \varphi(\bzero)\right|
        & =\left|\frac{\tilde f(\bx)}{\tilde g(\bx)}- \lambda(\bzero)\right|\\
        &=\frac{| \bP(\bx)\bx^\perp -\Lambda(\bzero)\bQ(\bx)\bx^\perp|}{|\bQ(\bx)\bx^\perp|}\\
        & \leq  \frac{ \| \bP(\bx) -\Lambda(\bzero)\bQ(\bx)\|\cdot |\bx^\perp|}{c|\bx^\perp|}\\
        & = \frac{\| \bP(\bx) -\Lambda(\bzero)\bQ(\bx)\|}{c} \to 0 \text{ as }\bx\to \bzero.
    \end{align*}
    Here $|\cdot|$ is the absolute value for a complex number and the standard Euclidean norm for an element of $\RR^2$, and  $\|\cdot\|$ is the operator norm defined by $\| A\|=\sup_{|\bz|=1} |A\bz|$ for a linear operator $A\coloneqq \RR^2 \to \RR^2$.  
   With the continuity of $\tilde\varphi|_{\tilde \Gamma}$,
   this shows the continuity of $\tilde\varphi$ at $\bzero$.  Hence \(f/g = \tilde\varphi \circ\Phi^{-1}\) is continuous at \(\ba\).

\vspace{5pt}
\noindent\emph{Proof of \hyperref[cl:P and Q]{Claim}.}  
We construct instances of $\bP,\bQ\in C^0(\tilde U, \RR^{2\times 2})$ with the stated properties.
For each $\bx\in\tilde U$, let us consider a function $F:[0,1]\to \CC\cong \RR^2$ by $F(t)= \tilde f(t\bx^\perp, \bx^\parallel)$. Since $F(0)=\tilde f(\bzero, \bx^\parallel)=0$, we have by the fundamental theorem of calculus that,
\begin{align*}
    \tilde f(\bx)=F(1)=\int^1_0 \frac{dF}{dt} dt 
    =\int^1_0 \nabla_{\bx^\perp}\tilde f(t\bx^\perp,\bx^\parallel) \cdot \bx^\perp dt. 
\end{align*}
 Using this relation, we define $\bP(\bx) = \int^1_0 \nabla_{\bx^\perp}\tilde f(t\bx^\perp,\bx^\parallel) dt$. Clearly $\tilde f(\bx)=\bP(\bx)\bx^\perp$. Note also that $\bP$ is continuous due to the $C^1$ regularity of $\tilde f$, and the invertibility of $\bP(\bzero,\bx^\parallel)=\nabla_{\bx^\perp}\tilde f(\bzero,\bx^\parallel)$ follows from the simple zero assumption of $f$. We can construct $\bQ$ using the same argument.



\end{proof}

\end{document}